\DeclarePairedDelimiter\floor{\lfloor}{\rfloor}
\newsavebox{\@brx}
\newcommand{\llangle}[1][]{\savebox{\@brx}{\(\m@th{#1\langle}\)}%
  \mathopen{\copy\@brx\kern-0.5\wd\@brx\usebox{\@brx}}}
\newcommand{\rrangle}[1][]{\savebox{\@brx}{\(\m@th{#1\rangle}\)}%
  \mathclose{\copy\@brx\kern-0.5\wd\@brx\usebox{\@brx}}}
\newenvironment{claim}[1]{\par\noindent\underline{Claim:}\space#1}{}
\newenvironment{claimproof}[1]{\par\noindent\underline{Proof:}\space#1}{\hfill $\blacksquare$}%creat claim environment
\numberwithin{equation}{section}
\definecolor{plum}{rgb}{0.8,0.2,0.8}
\title{\textbf{Effectivity in the Hyperbolicity-related Problems}}
\date{}
\author{Ya Deng\thanks{Email address: \texttt{Ya.Deng@univ-grenoble-alpes.fr}}}
\affil{Institut Fourier, Universit\'e Grenoble Alpes}
\begin{document}
\maketitle

\newtheorem{thm}{Theorem}[section]
\newenvironment{thmbis}[1]
{\renewcommand{\thethm}{\ref{#1}\emph{bis}}%
	\addtocounter{thm}{-1}%
	\begin{thm}}
	{\end{thm}}

\newtheorem{rem}{Remark}[section]
\newtheorem{problem}{Problem}[section]
\newtheorem{conjecture}{Conjecture}[section]
\newtheorem{lem}{Lemma}[section]
\newtheorem{cor}{Corollary}[section]
\newtheorem{dfn}{Definition}[section]
\newtheorem{proposition}{Proposition}[section]
\newtheorem{example}{Example}[section]
\theoremstyle{plain}
\newtheorem{main}{Main Theorem}

\def\oc{\mathcal{O}} \def\oce{\mathcal{O}_E} \def\xc{\mathcal{X}}
\def\ac{\mathcal{A}} \def\rc{\mathcal{R}} \def\mc{\mathcal{M}}
\def\wc{\mathcal{W}} \def\fc{\mathcal{F}} \def\cf{\mathcal{C_F^+}}
\def\jc{\mathcal{J}}
\def\ic{\mathcal{I}}
\def\kc{\mathcal{K}}
\def\lc{\mathcal{L}}
\def\vc{\mathcal{V}}
\def\yc{\mathcal{Y}}
\def\af{\mathbf{a}}

\def\cb{\mathbb{C}}
\def\ib{\mathbb{I}}
\def\ab{\mathbb{A}}
\def\vol{\operatorname{vol}}
\def\ord{\operatorname{ord}}
\def\Im{\operatorname{Im}}
\def\dm{\mathrm{d}}

\def\as{{a^\star}} \def\es{e^\star}

\def\tl{\widetilde} \def\tly{{Y}} \def\om{\omega}

\def\cb{\mathbb{C}} \def\nb{\mathbb{N}} \def\nbs{\mathbb{N}^\star}
\def\pb{\mathbb{P}} \def\pbe{\mathbb{P}(E)} \def\rb{\mathbb{R}}
\def\zbb{\mathbb{Z}}
\def\ys{\mathscr{Y}}
\def\ls{\mathscr{L}}
\def\hs{\mathscr{H}}
\def\grs{{\rm{Gr}}_{k+1}(V)}
\def\yf{\mathfrak{Y}}
\def\gf{\mathbf{G}}
\def\bff{\mathbf{b}}

\def\hb{\bold{H}} \def\fb{\bold{F}} \def\eb{\bold{E}}
\def\pbb{\bold{P}}

\def\nab{\overline{\nabla}} \def\n{|\!|} \def\spec{\textrm{Spec}\,}
\def\cinf{\mathcal{C}_\infty} \def\d{\partial}
\def\db{\overline{\partial}}
\def\hess{\sqrt{-1}\partial\overline{\partial}}
\def\zb{\overline{z}} \def\lra{\longrightarrow}
\def\tb{\overline{t}}
\def\sn{\sqrt{-1}}
\begin{abstract}
The aim of this work is to deal with effective questions related to the Kobayashi and Debarre conjectures, and based on the work of Damian Brotbek and Lionel Darondeau. We first show that if a line bundle $L$ generates $k$-jets, the $k$-th Wronskian ideal sheaf associated with global sections of $L$ reaches its largest possible value. Then we obtain an effective estimate for a weaker version of a theorem of Nakamaye on certain universal Grassmannians. We finally derive from this explicit effective bounds for the Kobayashi and Debarre conjectures.
\end{abstract}

%%%%%%%%%%%%%%%%%%%%%%%%%%%%%%%%%%%%%%%%%%%%%
\section{Introduction}\label{intro}
The celebrated Kobayashi conjecture states that a general hypersurface in $\pb^n$ of sufficient large degree $d\geq d_0$ is Kobayashi hyperbolic. In the last 15 years, at least three important techniques were introduced to study this problem: Siu's slanted vector fields for higher order jet spaces \cite{Siu15}, Demailly's approach for the study of the Green-Griffiths-Lang conjecture through directed varieties strongly of general type (\cite{Dem16}), and Brotbek's recent construction of families of varieties which are deformations of Fermat type hypersurfaces\cite{Bro16}. Inspired by their proof of the Debarre conjecture \cite{BD15} and by the form of equations used by Masuda and Noguchi, Brotbek \cite{Bro16} gave an elegant and relatively short proof of the Kobayashi conjecture. Although his proof does not yield an effective lower bound for the degree $d_0$, he has been able to give a formula for $d_0$ which depends on two constants $M$ and $m_{\infty}$ appearing in connection with certain Noetherianity arguments. In this paper we determine these two constants and thus give an effective lower bound for the degree of general hypersurfaces that are Kobayashi hyperbolic (see Main Theorem \ref{main} below). 

The above mentioned constant $M$, that depends on $n$, arises in the following theorem of Nakamaye: Let $X$ be a K\"ahler manifold and $L$ a big and nef line bundle on $X$. Then the null locus ${\rm Null}(L)$ of $L$ coincides with the augmented base locus $\mathbb{B}_+(L)$. It we fix an ample divisor $A$ on $X$, by the Noetherian property there exists a positive integer $m_0$ such that for $m\geq m_0$ one has ${\rm Bs}(mL-A)=\mathbb{B}_+(L)={\rm Null}(L)$. Now we are led to the following effectivity problem:
\begin{problem}\label{problem}
Can one get an effective bound for $m_0$ that depends on some numerical constants related to $L$ and $A$?
\end{problem}
In general, this problem is hard to answer. Fortunately, in order to compute $M$, we only need to consider some concrete projective manifolds which are universal Grassmannians, and the case where $L$ is the pull back of the tautological line bundle. Our first main result is Theorem \ref{good base locus}, which is an ``almost'' Nakamaye Theorem. To make things more precise, although we could not get the effective bound in the original statement of Nakamaye, we can obtain a bound such that the corresponding base locus is good enough to ensure that the arguments of \cite{Bro16} are still valid. The method can also be used to study the effective problem for the ampleness of the cotangent bundles of general complete intersections (\cite{BD15}).

The constant $m_{\infty}$ appears in relation with the Wronskian ideal sheaf attached to the Demailly-Semple tower. Let $L$ be a very ample line bundle on the K\"ahler manifold $X$. Given a collection of holomorphic sections $s_0,\ldots,s_k\in H^0(X,L)$, Brotbek \cite{Bro16} introduced the Wronskian associated with these sections as
$$
W(s_0,s_1,\ldots,s_k) \in H^0(X_k,\oc_{X_k}({\textstyle\frac{k(k+1)}{2}})\otimes \pi_{0,k}^*(L^{k+1})),
$$
where $X_k$ is the $k$-stage Demailly-Semple tower for the absolute directed variety $(X,T_X)$. Let
$$
\mathbb{W}(X_k,L):={\rm Span}\{W(s_0,\ldots,s_n)|s_0,\ldots,s_n\in H^0(X,L) \}\subset H^0(X_k,\oc_{X_k}({\textstyle\frac{k(k+1)}{2}})\otimes \pi_{0,k}^*(L^{k+1}))
$$
be the associated sublinear system of $H^0(X_k,\oc_{X_k}(\frac{k(k+1)}{2})\otimes \pi_{0,k}^*(L^{k+1}))$. One defines $\mathfrak{w}(X_k,L)$ to be the base ideal of the linear system $\mathbb{W}(X_k,L)$. By Lemma 2.5 in \cite{Bro16} if $L$ is very ample we have
$$
\mathfrak{w}(X_k,L)\subset \mathfrak{w}(X_k,L^2)\subset \ldots \subset \mathfrak{w}(X_k,L^m)\subset\ldots.
$$
Then the Noetherian property shows that this increasing sequence stabilizes for some $m_0:=m_{\infty}(X_k,L)$, and the asymptotic ideal sheaf is denoted by $\mathfrak{m}_{\infty}(X_k,L)$. As was proved in \cite{Bro16}, $\mathfrak{m}_{\infty}(X_k,L)$ does not depend on the very ample line bundle $L$ and is called the \emph{asymptotic Wronskian ideal sheaf}, denoted by $\mathfrak{m}_{\infty}(X_k)$. Now we have the following natural problem:
\begin{problem}\label{wronskian quest}
Determine the constant	$m_{\infty}(X_k,L)$.
\end{problem}

The following theorem solves Problem \ref{wronskian quest}:
\begin{thm}\label{wronskian result}
	If $L$ generates $k$-jets at each point of $X$, then $m_{\infty}(X_k,L)=1$. In particular, if $L$ is known to be only very ample, we have $m_{\infty}(X_k,L)=k$.
\end{thm}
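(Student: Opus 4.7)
The plan is to reduce the statement to a stalk-wise computation and to show that when $L$ generates $k$-jets, the stalk $\mathfrak{w}(X_k, L)_p$ at every $p \in X_k$ already equals the ``maximal'' Wronskian ideal at $p$, so that the chain $\mathfrak{w}(X_k, L) \subseteq \mathfrak{w}(X_k, L^m) \subseteq \mathfrak{m}_{\infty}(X_k)$ is forced to be an equality for all $m \geq 1$.

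Fix $p \in X_k$ lying over $x_0 \in X$, choose local coordinates $z_1, \ldots, z_n$ at $x_0$, a nonvanishing local frame $e$ of $L$ near $x_0$, and a local trivialization $\tau$ of the line bundle $\oc_{X_k}(\tfrac{k(k+1)}{2}) \otimes \pi_{0,k}^*(L^{k+1})$ near $p$. Writing a section $s = f\cdot e$ locally, the Wronskian construction produces a universal differential expression $W(s_0,\ldots,s_k) = \Delta(j^k_{x_0} f_0, \ldots, j^k_{x_0} f_k)\cdot \tau$, where $\Delta$ takes as input the $k$-jets of the $f_j$ at $x_0$ together with the tautological $k$-jet coordinates on $X_k$ above $x_0$, and produces a local function on $X_k$. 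The crucial structural point --- which is also the main technical obstacle --- is to verify that $\Delta$ genuinely factors through the $k$-jets, i.e.\ that the local expression at $p$ depends only on the classes $j^k_{x_0} f_j$; this jet-filtration property is precisely what makes the Wronskian of \cite{Bro16} descend to $X_k$, and can be checked inductively along the Demailly-Semple tower.

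Granting this, $\mathfrak{w}(X_k, L)_p$ is generated by the functions $\Delta(j^k_{x_0} f_0, \ldots, j^k_{x_0} f_k)$ as the $f_j$ range over local representatives of global sections of $L$ in the frame $e$; hence it depends only on the image of the $k$-jet evaluation map $H^0(X, L) \to J^k_{x_0}(L)$. By hypothesis this image is the whole of $J^k_{x_0}(L)$, so after trivializing by $e$, the available $k$-jets cover the full space of $k$-jets of local functions at $x_0$. Consequently $\mathfrak{w}(X_k, L)_p$ coincides with the ``universal'' ideal $\mathfrak{M}(p) \subseteq \oc_{X_k,p}$ generated by $\Delta$ applied to arbitrary $(k+1)$-tuples of such $k$-jets.

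The same local computation applied to $L^m$ (using the frame $e^m$ and writing $t_i = g_i\cdot e^m$) shows that $\mathfrak{w}(X_k, L^m)_p$ is generated by $\Delta(j^k_{x_0} g_0, \ldots, j^k_{x_0} g_k)$ with $(g_j)$ in the image of $H^0(X, L^m) \to J^k_{x_0}(L^m)$, which is in any case a subspace of the full space of $k$-jets of functions at $x_0$. Thus $\mathfrak{w}(X_k, L^m)_p \subseteq \mathfrak{M}(p) = \mathfrak{w}(X_k, L)_p$; combined with the opposite inclusion from the chain this forces equality for every $m$ and every $p$, proving $m_\infty(X_k, L) = 1$. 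For the ``in particular'' statement, very ampleness of $L$ implies that $L^k$ generates $k$-jets (a standard consequence of very ampleness via iterated multiplication), so applying the first part to $L^k$ yields $\mathfrak{w}(X_k, L^k) = \mathfrak{m}_\infty(X_k)$, hence $m_\infty(X_k, L) \le k$.
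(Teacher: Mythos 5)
Your overall strategy coincides with the paper's: reduce to a stalk-wise statement, observe that the Wronskian ideal at a point over $x_0$ is determined by the image of the $k$-jet evaluation map $H^0(X,L)\to J^k_{x_0}(L)$, and conclude that $k$-jet generation forces $\mathfrak{w}(X_k,L)_p$ to equal the maximal ("universal") Wronskian ideal, hence to contain every $\mathfrak{w}(X_k,L^m)_p$; the \emph{in particular} clause is likewise handled as in the paper ($L$ very ample implies $L^k$ generates $k$-jets, giving $m_\infty(X_k,L)\le k$, which is all that is actually used).

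The problem is that the single step you explicitly defer --- that the local Wronskian expression $\Delta$ factors through the $k$-jets $j^k_{x_0}f_i$, equivalently, by multilinearity of $W$, that $W(f_0,\ldots,f_k)=0$ whenever $f_0\in\mathfrak{m}_{x_0}^{k+1}$ --- is the \emph{entire} technical content of the paper's proof, and your proposed route for it ("checked inductively along the Demailly-Semple tower") is neither carried out nor the route the paper takes. The paper proves it directly: for any point $w$ of the fiber $\mathcal{R}_{n,k}$ of the tower over $x_0$, Demailly's regularity theorem provides a germ of curve $\nu$ with $\nu_{[k]}(0)=w$ and $\nu_{[k-1]}'(0)\ne 0$; the evaluation of $W(f_0,\ldots,f_k)$ on $\nu_{[k-1]}'(0)^{k(k+1)/2}$ is the classical Wronskian determinant of the functions $f_i\circ\nu$ at $t=0$, and since $f_0\circ\nu$ has multiplicity at least $k+1$ at $t=0$, every entry of the first column is divisible by $t$, so the determinant vanishes at $t=0$; as $w$ was arbitrary, the section vanishes identically. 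Without this lemma (or an actually executed induction up the tower, which would be considerably more painful), your argument is incomplete at precisely its load-bearing point. Once the lemma is granted, the rest of your reasoning --- $\mathfrak{w}(X_k,L^m)_p\subseteq\mathfrak{M}(p)=\mathfrak{w}(X_k,L)_p$ together with the increasing chain from Lemma 2.5 of \cite{Bro16} --- is correct and yields $m_\infty(X_k,L)=1$.
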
 
By Theorem \ref{wronskian result} we have $m_{\infty}=n$ if we take $k=n$. By using some further arguments of \cite{Bro16} we then obtain the following effective lower bound for the degree of general hypersurfaces in the projective manifold $X$ which are Kobayashi hyperbolic:

\begin{main}\label{main}
	Let $X$ be a projective manifold of dimension $n$ and $L$ a very ample line bundle on $X$. Then a generic hypersurface in $|H^0(X,L^d)|$ with $d\geq n^{n+1}(n+1)^{n+2}(n^3+2n^2+2n-1) + n^3 + 3n^2 + 3n$ is Kobayashi hyperbolic. 
\end{main}

The technique can also be used to study the corresponding effectivity question in the Debarre conjecture, and we obtain in this case the following result:
\begin{main}\label{Debarre}
Set $c_0:=\floor{\frac{N+1}{2}}$. For all $d\geq  4c_0(2N-1)^{2c_0+1}+6N-3$, the complete intersection of general hypersurfaces $H_1,\ldots,H_c$ with $H_1,\ldots,H_{c_0}$ in  $\oc_{\pb^N}(d)$ and other hypersurfaces $(H_i)_{i>c_0}$ of any degree, has an ample cotangent bundle.
\end{main}

\section{Asymptotic Wronskian Ideal Sheaf in the Demailly-Semple Tower}\label{amis}

 We denote by $(X_k,V_k)$ the Demailly-Semple tower for $(X,T_X)$, where $X$ is a complex manifold, and $D_j:=P(T_{X_{j-1}/X_{j-2}})$. With an abuse of notation in our context, we also identify $D_j$ with the divisor of $X_k$ given formally by the pull-back $\pi_{j,k}^{-1}(D_j)$. Let $U$ be an open set with coordinates $(z_1,\ldots,z_n)$ such that $\pi_{0,k}^{-1}(U)$ is a trivial product of $U\times \rc_{n,k}$, where $\rc_{n,k}$ is the “universal” rational variety $\cb^{nk}/\mathbb{G}_n$ \cite{Dem95}. We denote by $g_k:\pi_{0,k}^{-1}(U)\rightarrow U\times \rc_{n,k}$ the natural biholomorphism. Then each $D_i\cap \pi_{0,k}^{-1}(U)$ is also a trivial product $U\times E_j$ under the biholomorphism  $g_k$, where $E_j$ is a smooth prime divisor in $\rc_{n,k}$. Let $p\in U$ be the point corresponding to the origin in the coordinate $(z_1,\ldots,z_n)$. Therefore, we have the following biholomorphism $q_k\circ g_k:\pi_{0,k}^{-1}(p)\cap D_j\xrightarrow{\approx} E_j$, where $q_k:U\times \rc_{n,k}\rightarrow \rc_{n,k}$ is the projection to the second factor. In the paper, we will make the convention that any germ of holomorphic curve is defined over $p$, and we will not distinguish the points in $E_j$ and $\pi_{0,k}^{-1}(p)\cap D_j$.

For any germ of holomorphic curve $\nu(t)$ with $\nu(0)=p$, its lift to the $(k-1)$-th Demailly-Semple tower $\nu_{[k-1]}(t)$ induces a section $\nu_{[k-1]}'(t)$ of the line bundle $\nu_{[k]}^*\oc_{X_k}(-1)$. Then by the definition of the Wronskian,  for any $f_0,\ldots,f_n\in \oc_{X,p}$,  $W(f_0,f_1,\ldots,f_k)$ can be seen as a section of $H^0(\rc_{n,k},\oc_{\rc_{n,k}}(m))$ with $m=\frac{k(k+1)}{2}$, given by 
\begin{eqnarray}\label{formula}
W(f_0,\ldots,f_k)(\nu_{[k-1]}'(0)^m)=\begin{vmatrix}
f_0\circ \nu(t) & \ldots & f_k\circ \nu(t) \\
\vdots & \ddots & \vdots \\
\frac{\dm ^k f_0\circ \nu(t)}{\dm t^k} & \ldots & \frac{\dm ^k f_k\circ \nu(t)}{\dm t^k}
\end{vmatrix}_{t=0}.
\end{eqnarray}
We set
$$
S:={\rm Span}\{W(f_0,\ldots,f_n)|f_0,\ldots,f_n\in \oc_{X,p} \}\subset H^0(\rc_{n,k},\oc_{\rc_{n,k}}(m)),
$$
and denote by $\mathcal{I}_{n,k}\subset \oc_{\rc_{n,k}}$ the base ideal of the linear system $S$. By the product property (and the invariance by translation) we find
\begin{equation}
g_k^*\circ q_k^{*}\mathcal{I}_{n,k}=\mathfrak{w}_{\infty}(X_k)
\end{equation}
at $\pi^{-1}_{0,k}(U)$, where $\mathfrak{m}_{\infty}(X_k)$ is the asymptotic Wronskian ideal sheaf introduced in Section \ref{intro}.

\begin{thm}Let $f_0\in m_p^{k+1},f_1,\ldots,f_k\in \oc_{X,p}$, then $W(f_0,\ldots,f_k)=0$ as a section of $H^0(\rc_{n,k},\oc_{\rc_{n,k}}(m))$.
\end{thm}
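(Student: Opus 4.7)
The plan is to establish the vanishing of $W(f_0,\ldots,f_k)$ pointwise on a Zariski-dense subset of $\rc_{n,k}$ and then invoke holomorphy. The Demailly--Semple construction $\rc_{n,k} = \cb^{nk}/\mathbb{G}_n$ is set up so that the points of the form $\nu'_{[k-1]}(0)$ attached to \emph{regular} germs $\nu$ of holomorphic curves with $\nu(0)=p$ fill out a Zariski-open dense subset of the fiber over $p$. On this subset formula (2.1) computes the values of the section $W(f_0,\ldots,f_k)$, so it is enough to prove that the right-hand determinant in (2.1) vanishes for every regular germ $\nu$.

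The heart of the proof is then an elementary order-of-vanishing computation. Choose local coordinates $(z_1,\ldots,z_n)$ centered at $p$, so that the hypothesis $f_0\in m_p^{k+1}$ translates into the Taylor expansion
\[
f_0(z) \;=\; \sum_{|\alpha|\geq k+1} c_\alpha\, z^\alpha.
\]
Writing $\nu(t)=(\nu_1(t),\ldots,\nu_n(t))$ with $\nu_i(0)=0$, each component satisfies $\nu_i(t)=O(t)$, so $\nu(t)^\alpha=O(t^{|\alpha|})$ for every multi-index $\alpha$. Summing over $|\alpha|\geq k+1$ yields $f_0\circ\nu(t)=O(t^{k+1})$, and hence
\[
\frac{\dm^j (f_0\circ \nu)}{\dm t^j}\bigg|_{t=0} \;=\; 0 \qquad \text{for all } j=0,1,\ldots,k.
\]
Thus the entire first column of the matrix appearing in (2.1) is zero, and the determinant vanishes. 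This gives $W(f_0,\ldots,f_k)(\nu'_{[k-1]}(0)^m)=0$ for every regular germ $\nu$ through $p$.

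I do not anticipate any serious obstacle. The computation is formal, and the transition from pointwise vanishing on the dense subset of regular jets to the vanishing of $W(f_0,\ldots,f_k)$ as a global section of $\oc_{\rc_{n,k}}(m)$ is automatic from holomorphy (equivalently, from the fact that $W(f_0,\ldots,f_k)$ is polynomial in the coordinates of $\rc_{n,k}$). The only point requiring care is to make the density statement precise, which can be done by referring directly to the explicit description of $\rc_{n,k}$ as $\cb^{nk}/\mathbb{G}_n$ in \cite{Dem95}.
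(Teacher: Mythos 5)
Your proof is correct and follows essentially the same route as the paper: the core of both arguments is the observation that $f_0\in m_p^{k+1}$ forces $f_0\circ\nu(t)=O(t^{k+1})$, so the first column of the Wronskian matrix in (2.1) vanishes at $t=0$. The only (harmless) difference is at the final step: the paper invokes the theorem of \cite{Dem95} asserting that \emph{every} point $w\in\rc_{n,k}$ equals $\nu_{[k]}(0)$ for some germ with $\nu_{[k-1]}'(0)\neq 0$, so the pointwise vanishing holds everywhere directly, whereas you restrict to regular germs on a dense subset and conclude by density and holomorphy.
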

\begin{proof}
	For any $w\in \rc _{n,k}$, by a theorem in \cite{Dem95}, there exists a germ of curve $\nu:(\cb,0)\rightarrow (X,p)$ such that $\nu_{[k]}(0)=w$ and $\nu_{[k-1]}'(0)\neq 0$. Since $\nu_{[k-1]}'(0)\in \oc_{\rc_{n,k}}(-1)|_{w}$, then if we can show that 
	$W(f_0,\ldots,f_k)(\nu_{[k-1]}'(0)^m)=0$ and take $w$ to be arbitrary, we infer the theorem.
	
	Since $f_0\in m_p^{k+1}$, then the multiplicity of $f_0\circ \nu(t)$ at 0 must be no less than $k+1$, and thus $\frac{(f_0\circ \nu(t))^{(i)}}{t}$ is still holomorphic for $i=0,\ldots,k$. Thus 
	\begin{eqnarray}\nonumber
	W(f_0,\ldots,f_k)(\nu_{[k-1]}'(0)^m)=t\cdot \begin{vmatrix}
	\frac{f_0\circ \nu(t)}{t} & \ldots & f_k\circ \nu(t) \\
	\vdots & \ddots & \vdots \\
	\frac{1}{t}\cdot\frac{\dm ^k f_0\circ \nu(t)}{\dm t^k} & \ldots & \frac{\dm ^k f_k\circ \nu(t)}{\dm t^k}
	\end{vmatrix}_{t=0}=0.
	\end{eqnarray}
\end{proof}

Since the Wronskian operator $W:\oc_{X,p}\times \ldots \times \oc_{X,p}\rightarrow H^0(\rc_{n,k},\oc_{\rc_{n,k}}(m))$ is bilinear, we get
\begin{thmbis}{wronskian result}
	If $L$ generates $k$-jets at each point of $X$, then $m_{\infty}(X_k,L)=1$, where $m_{\infty}(X_k,L)=1$ is the positive integer defined in Section \ref{intro}. In particular, if $L$ is known to be only very ample, we have $m_{\infty}(X_k,L)=k$.
\end{thmbis}

\section{Effectivity of a ``good" base locus for universal Grassmannians}\label{Nakamaye}
In this section we study Problem \ref{problem} for some specific manifolds. First we begin with some definitions. For the reader's convenience, we adopt the same notations as in \cite{Bro16}.

 We consider $V:=H^0(\pb^N,\oc_{\pb^N}(\delta))$, that is, the space of homogenous polynomials of degree $\delta$ in $\cb[z_0,\ldots,z_n]$, and for any $J\subset \{0,\ldots,N\}$, we set
$$
\pb_J:=\{[z_0,\ldots,z_N]\in \pb^N | z_j=0\ {\rm{if}}\ j\in J \}.
$$
Given any $\Delta\in \rm{Gr}_{k+1}(V)$ and $[z]\in \pb^N$, we write $\Delta([z])=0$ if $P(z)=0$ for all $P\in \Delta\subset V$. We then define the family
$$
\mathscr{Y}:=\{(\Delta,[z])\in {\rm{Gr}}_{k+1}(V)\times \pb^N |\Delta([z])=0 \},
$$
and for any $J\subset\{0,\ldots,N\}$, set 
$$
\mathscr{Y}_J:=\mathscr{Y}\cap ({\rm{Gr}}_{k+1}(V)\times \pb_J)\subset \ys\cap {\rm{Gr}}_{k+1}(V)\times \pb^N.
$$
Let $k+1\geq N$, then $q_1:\ys\rightarrow \rm{Gr}_{k+1}(V)$ is a generically finite to one morphism. We also define $q_2:\ys\rightarrow \pb^N$ to be the projection on the second factor. Let $\ls$ be the very ample line bundle on $\grs$ which is the pull back of $\oc(1)$ under the Pl\"ucker embedding. Then $q_1^*\ls|_{\ys_J}$ is a big and nef line bundle on $\ys_J$ for any $J$. For any $J\subset \{0,\ldots,N\}$ we denote by $p_J:\ys_J\rightarrow \grs$, and $\hat{p}_J:\ys_J\rightarrow \pb_J$ the second projection. Similarly we set
$$
E_J:=\{y\in\ys | {\rm{dim}}_y(p_J^{-1}(p_J(y)))>0 \}
$$
$$
G^\infty_J:=p_J(E_J)\subset \grs,
$$
then $E_J=\rm{Null}(q_1^*\ls|_{\ys_J})$. For  $J=\emptyset$ we have $\ys_J=\ys$ and denote by $E:=E_{\emptyset}$ and $G^\infty:=G^\infty_{\emptyset}$.
By the theorem of Nakamaye we have
$$
{\rm{Null}}(q_1^*\ls|_{\ys_J})={\mathbb{B}}_+(q_1^*\ls|_{\ys_J}).
$$
Since $q_1^*\ls\otimes q_2^*\oc_{\pb^N}(1)|_{\ys_J}$ is an ample line bundle on $\ys_J$, by Noetherianity there exists $m_J\in \mathbb{N}$ such that
$$
E_J=\mathbb{B}_+(q_1^*\ls|_{\ys_J})={\rm{Bs}}(q_1^*\ls^{m}\otimes q_2^*\oc_{\pb_J}(-1)|_{\ys_J})
$$ 
for  every $m\geq m_J$.

 From now on we assume that $k+1=N$ (since this is enough in Brotbek's work), and thus $q_1:\ys\rightarrow \rm{Gr}_{k+1}(V)$ is a generically finite-to-one surjective morphism. We first deal with the case $J=\emptyset$.

Let us pick a smooth rational curve $C$ in $\grs$ of degree $1$, namely the curve given by 
$$
\Delta([t_0,t_1]):={\rm{Span}}(z_1^\delta, z_2^\delta,\ldots z_{N-1}^\delta, t_0z_N^\delta+t_1z_0^\delta),
$$
where $[t_0,t_1]\in \pb^1$. We are going to show that $\Delta([t_0,t_1]):\pb^1\rightarrow \grs$ is a smooth embedding from $\pb^1$ to $\grs$, and that the corresponding line $C$ satisfies the degree condition $\ls\cdot C=1$.

Set $\mathbb{I}:=\{I=(i_0,\ldots,i_N)|\;|I|=\delta \}$. Then $\{z^I\}_{I\in \mathbb{I}}$ is a basis for $V$, and the collection of multivectors $w_{(I_0,\ldots,I_k)}$, $I_0<I_1<\ldots< I_k$, is a basis for $\Lambda^{k+1}V$, where $I<J$ is defined by lexicographical order in $\mathbb{I}$. Now we set $J_i\in \mathbb{I}$ to be $z^{J_i}:=z_i^{\delta}$ for $0\leq i\leq k$. Then the curve $\tilde{C}$ defined in $\pb(\Lambda^{k+1}V)$ given by the equations $w_{(I_0,\ldots,I_k)}=0$ for all $(I_0,\ldots,I_k)\neq (J_1,\ldots,J_N) \mbox{
 and } (J_0,\ldots,J_{N-1})$, is of degree 1 with respect to $\ls$. $\tilde{C}$ is totally contained in the image of the Pl\"ucker embedding of $\grs$, whose inverse image is $C$. Now consider the hyperplane $D$ in $\pb^N$ given by  $\{[z_0,\ldots,z_N]|z_0+z_N=0\}$. We have
\begin{lem}\label{intersection}
The intersection number of the curve $q_1^*C$ and the divisor $q_2^*D$ in $\ys$ is $\delta^{N-1}$. Moreover, $q_{1*}q_2^*D\sim \delta^{N-1}\ls$, where {\rm ``$\sim$''} stands for linear equivalence.
\end{lem}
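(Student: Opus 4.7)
My plan is to reduce both assertions to a pushforward computation on $\grs$ via the projection formula. Since $\mathrm{Pic}(\grs) = \mathbb{Z}\ls$, and the degree-one description of $\tilde C$ inside $\pb(\Lambda^{k+1}V)$ under the Pl\"ucker embedding yields $\ls \cdot C = 1$, writing $q_{1*}q_2^*D = a\ls$ and applying the projection formula gives $q_1^*C \cdot q_2^*D = C \cdot q_{1*}q_2^*D = a$. Hence both statements in the lemma follow once I show $a = \delta^{N-1}$.

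The key step is realizing $\ys \subset \grs \times \pb^N$ as the zero locus of a natural section of a rank-$N$ vector bundle. Composing the tautological inclusion $S \hookrightarrow V \otimes \oc_\grs$ with the evaluation $V \otimes \oc_{\pb^N} \to \oc_{\pb^N}(\delta)$ produces a section of
$$\mathcal{E} := p_1^*S^\vee \otimes p_2^*\oc_{\pb^N}(\delta)$$
on $\grs \times \pb^N$ whose zero locus is exactly $\ys$. Since $q_1:\ys \to \grs$ is generically finite, $\ys$ has the expected codimension $N = \mathrm{rk}\,\mathcal{E}$, and hence $[\ys] = c_N(\mathcal{E})$ in the Chow ring of $\grs \times \pb^N$.

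Using the twist formula $c_N(F \otimes L) = \sum_{j=0}^N c_j(F)\,c_1(L)^{N-j}$ with $F = p_1^*S^\vee$ and $L = p_2^*\oc_{\pb^N}(\delta)$, one obtains
$$c_N(\mathcal{E}) = \sum_{j=0}^N \delta^{N-j}\, p_1^*c_j(S^\vee) \cdot H^{N-j},$$
where $H := p_2^*c_1(\oc_{\pb^N}(1))$. Pushing $c_N(\mathcal{E}) \cdot H$ forward to $\grs$ by $p_1$ and using that $p_{1*}H^a$ vanishes unless $a = N$ (in which case $p_{1*}H^N = [\grs]$), only the $j = 1$ term survives, giving
$$q_{1*}q_2^*D = p_{1*}(c_N(\mathcal{E}) \cdot H) = \delta^{N-1}\,c_1(S^\vee) = \delta^{N-1}\,\ls,$$
since $c_1(S^\vee)$ is precisely the Pl\"ucker hyperplane class.

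The principal technical point is the identification $[\ys] = c_N(\mathcal{E})$, which requires that $\ys$ has codimension $N$; this is automatic from the fact that $q_1$ is generically finite, so $\dim \ys = \dim \grs$. Given this, the rest of the argument is a routine application of the projection formula and the twist formula for the top Chern class of a tensor product with a line bundle.
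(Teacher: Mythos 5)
Your proof is correct, but it runs in the opposite direction from the paper's. The paper first computes the intersection number directly: it observes that $q_1^{-1}(C)\cap q_2^{-1}(D)$ is the single point $\big(\mathrm{Span}(z_1^\delta,\ldots,z_{N-1}^\delta,z_N^\delta+(-1)^{\delta+1}z_0^\delta),[1,0,\ldots,0,-1]\big)$, checks by a local computation that the multiplicity there is $\delta^{N-1}$ (the factor coming from the non-reducedness of $z_1^\delta=\cdots=z_{N-1}^\delta=0$ along $q_1^{-1}(C)$), and only then invokes the projection formula together with $\mathrm{Pic}(\grs)\cong\mathbb{Z}\ls$ and $\ls\cdot C=1$ to get the linear equivalence. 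You instead compute the class $q_{1*}q_2^*\oc_{\pb^N}(1)$ globally, by realizing $\ys$ as the zero locus of the natural section of $p_1^*S^\vee\otimes p_2^*\oc_{\pb^N}(\delta)$, expanding $c_N$ of the twist, and pushing forward; the intersection number then falls out of the projection formula. Your route is more systematic: it does not depend on the particular curve $C$ beyond the normalization $\ls\cdot C=1$, and it replaces the paper's unexplained ``easy computation'' of the local multiplicity by a Chern-class identity. The one step you should justify more carefully is the identification $[\ys]=c_N(\mathcal{E})$: having the expected codimension only gives that $c_N(\mathcal{E})$ equals the cycle of the zero \emph{scheme}, which is $m[\ys]$ for some $m\geq 1$ since $\ys$ is irreducible (it is a Grassmannian bundle over $\pb^N$ via $q_2$); you need $m=1$, i.e.\ generic reducedness of the zero scheme, which holds because a generic $\Delta\in\grs$ has a reduced $0$-dimensional zero locus in $\pb^N$, so the section is transverse at a generic point of $\ys$. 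With that remark added, your argument is complete.
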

\begin{proof}
	An easy computation shows that $q_1^*C$ and $q_2^*D$ intersect only at the point 
$${\rm{Span}}(z_1^\delta, z_2^\delta,\ldots z_{N-1}^\delta,\\ z_N^\delta+(-1)^{\delta+1}z_0^\delta)\times [1,0\ldots,0,-1]\in \ys$$
with multiplicity $\delta^{N-1}$. The first statement follows. By the projection formula we have
	$$
	q_{1*}q_2^*D\cdot C=q_{1*}(q_2^*D\cdot q_1^*C)=\delta^{N-1}.
	$$
	Since $\rm{Pic(\grs)}\approx \zbb$ with the generator $\ls$, then we get $q_{1*}q_2^*D\sim \delta^{N-1}\ls$ by the fact that $\ls\cdot C=1$.
\end{proof}
We first observe that $q_1^*q_{1*}q_2^*D-q_2^*D$ is an effective divisor  of $\ys$, and by Lemma \ref{intersection} we conclude that $q_1^*\ls^{\delta^{N-1}}\otimes q_2^*\oc_{\pb^N}(-1)$ is effective. We also have a good control of the base locus as follows:
\medskip
\begin{claim}
\begin{eqnarray}\label{claim}
{\rm{Bs}}(q_1^*\ls^{m}\otimes q_2^*\oc_{\pb^N}(-1))\subset q_1^{-1}(G^{\infty})\qquad \mbox{ for any } m\geq\delta^{N-1}.
\end{eqnarray}
\end{claim}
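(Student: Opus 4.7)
The plan is to exploit the freedom of the hyperplane in Lemma \ref{intersection}: the computation there only uses that $D$ is some hyperplane, so by the same argument, for every hyperplane $D'\subset \pb^N$ one has $q_{1*}q_2^*D'\sim \delta^{N-1}\ls$ in $\operatorname{Pic}(\grs)\simeq\zbb$, giving rise to an effective divisor
$$
\tau_{D'}:=q_1^*q_{1*}q_2^*D'-q_2^*D'\in |q_1^*\ls^{\delta^{N-1}}\otimes q_2^*\oc_{\pb^N}(-1)|.
$$
The effectivity of $\tau_{D'}$ comes from the classical norm construction: on the open locus where $q_1$ is finite, namely off $q_1^{-1}(G^\infty)$, the norm along $q_1$ of a local equation for $q_2^*D'$ is a local equation for $q_{1*}q_2^*D'$, and its pullback contains $q_2^*D'$ point by point on each fibre. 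The claim then reduces to showing that for every $y=(\Delta,[z])\in\ys\setminus q_1^{-1}(G^\infty)$ one can choose a hyperplane $D'$ with $y\notin\tau_{D'}$.

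I would fix such a $y$. Since $\Delta\notin G^\infty$, the fibre $q_1^{-1}(\Delta)$ is zero-dimensional, consisting of finitely many points $(\Delta,[z^{(1)}]),\ldots,(\Delta,[z^{(r)}])$, with $[z]$ among them. I would then choose a hyperplane $D'\subset\pb^N$ avoiding the finite set $\{[z^{(1)}],\ldots,[z^{(r)}]\}$, which is always possible. Then $q_2^*D'$ avoids the whole fibre $q_1^{-1}(\Delta)$: in particular $y\notin q_2^*D'$, and a local equation for $q_2^*D'$ is a non-vanishing unit in a neighbourhood of each point of $q_1^{-1}(\Delta)$. Its norm along $q_1$ is therefore a non-vanishing unit at $\Delta$, which means $\Delta\notin\operatorname{supp}(q_{1*}q_2^*D')$ and hence $y\notin q_1^*q_{1*}q_2^*D'$. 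Both terms defining $\tau_{D'}$ vanish to order zero at $y$, so $y\notin\tau_{D'}$.

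This settles $m=\delta^{N-1}$. For $m>\delta^{N-1}$ I would tensor with a section of the globally generated line bundle $q_1^*\ls^{m-\delta^{N-1}}$ not vanishing at $y$; such a section exists because $\ls$ is very ample on $\grs$, so $q_1^*\ls$ is globally generated. The main technical obstacle is rigorously justifying the effectivity of $\tau_{D'}$ as a Cartier divisor on all of $\ys$, not only on the locus where $q_1$ is finite; this requires some care about the behaviour of $q_1$ over $G^\infty$ and about the possible non-normality of $\ys$. Equivalently, one can phrase the whole construction via the resultant: for each $D'$ the section of $\ls^{\delta^{N-1}}$ on $\grs$ that vanishes on the locus of subspaces having a common zero on $D'\cong\pb^{N-1}$ is, by Bezout, of the stated class, and the argument reduces to noting that when $\Delta\notin G^\infty$ and $D'$ misses the finite common zero locus of $\Delta$, this resultant is nonzero at $\Delta$.
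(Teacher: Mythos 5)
Your proposal is correct and follows essentially the same route as the paper: both arguments take the effective divisor $q_1^*q_{1*}q_2^*D'-q_2^*D'\in|q_1^*\ls^{\delta^{N-1}}\otimes q_2^*\oc_{\pb^N}(-1)|$ for a hyperplane $D'$ chosen to avoid the finite fibre $q_1^{-1}(\Delta_0)$, deduce that the given point lies outside its support, and then handle $m>\delta^{N-1}$ by twisting with sections of the very ample $\ls$. Your norm-of-a-local-equation justification for why $\Delta_0\notin\operatorname{supp}(q_{1*}q_2^*D')$ is a slightly more careful version of the paper's set-theoretic observation that $\operatorname{Supp}(q_1^*q_{1*}q_2^*D'-q_2^*D')\subset q_1^{-1}\bigl(q_1\bigl(q_2^{-1}(D')\bigr)\bigr)$, but the underlying idea is the same.
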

\begin{claimproof}
Indeed, for any $y_0\notin q_1^{-1}(G^{\infty})$, if we denote by $\Delta_0:=q_1(y_0)$, then $ q_1^{-1}(\Delta_0)$ is a finite set, and one can choose a hyperplane $D'\in H^0(\pb^N,\oc_{\pb^N}(1))$ such that $D'\cap q_2\big(q_1^{-1}(\Delta_0)\big)=\emptyset$. From the result above we know that the divisor $q_1^*q_{1*}q_2^*D'-q_2^*D'$ is effective and lies in the linear system $|q_1^*\ls^{\delta^{N-1}}\otimes q_2^*\oc_{\pb^N}(-1)|$ of $\ys$. Now we show that $y_0\notin {\rm{Supp}}(q_1^*q_{1*}q_2^*D'-q_2^*D')$. Indeed, for any $\Delta\in \grs$, if we denote by ${\rm{Int}}(\Delta):=\{[z]\in\pb_N | \Delta([z])=0\}$, then $q_2\big(q_1^{-1}(\Delta_0)\big)={\rm{Int}}(\Delta_0)$. Hence the condition  $D'\cap q_2\big(q_1^{-1}(\Delta_0)\big)=\emptyset$ is equivalent to that ${\rm{Int}}(\Delta_0)\cap D'=\emptyset$. However, for any $\Delta\in q_1\big(q_2^{-1}(D')\big)$, we must have ${\rm{Int}}(\Delta)\cap D'\neq \emptyset$, therefore $\Delta_0\notin q_1\big(q_2^{-1}(D')\big)$ and thus $y_0\notin q_1^{-1}\Big(q_1\big(q_2^{-1}(D')\big)\Big)\supset {\rm{Supp}}(q_1^*q_{1*}q_2^*D'-q_2^*D')$. As $y_0$ was arbitrary, we conclude that
$$
{\rm{Bs}}(q_1^*\ls^{\delta^{N-1}}\otimes q_2^*\oc_{\pb^N}(-1))\subset q_1^{-1}(G^{\infty}).
$$
Since $\ls$ is very ample on $\grs$, and $q_1$ is finite to one morphism outside $q_1^{-1}(G^{\infty})$, we conclude that for any $m\geq \delta^{N-1}$,
$$
{\rm{Bs}}(q_1^*\ls^{m}\otimes q_2^*\oc_{\pb^N}(-1))\subset q_1^{-1}(G^{\infty}).
$$
\end{claimproof}

Now we deal with the general case $J\subset \{0,\ldots,N\}$. First recall our previous notation $p_J:\ys_J\rightarrow \grs$, and let $\hat{p}_J:\ys_J\rightarrow \pb_J$ be the second projection. Our method consists of repeating the above arguments. For any $y_0\notin p_J^{-1}(G_J^{\infty})$, if we denote by $\Delta_0:=p_J(y_0)$, then the set $ p_J^{-1}(\Delta_0)$ is finite. Thus one can choose a generic hyperplane $D\in H^0(\pb^N,\oc_{\pb^N}(1))$ such that $D\cap \pb_J\cap  \hat{p}_J\big(p_J^{-1}(\Delta_0)\big)=\emptyset$, which means that ${\rm{Int}}(\Delta_0)\cap D\cap \pb_J=\emptyset$.
However, for any $\Delta\in p_J\big(\hat{p}_J^{-1}(D\cap \pb_J)\big)$ we must have
${\rm{Int}}(\Delta)\cap D\cap \pb_J\neq\emptyset$, and thus $\Delta_0\notin p_J\big(\hat{p}_J^{-1}(D\cap \pb_J)\big)$, \emph{a fortiori} $y_0\notin p_J^{-1}\Big(p_J\big(\hat{p}_J^{-1}(D\cap \pb_J)\big)\Big)=q_1^{-1}\Big(q_1\big(q_2^{-1}(D)\big)\Big)\cap \ys_J$. Thus the restriction of the effective divisor $q_1^*q_{1*}q_2^*D-q_2^*D$ to $\ys_J$ is well-defined and $y_0\notin {\rm Supp}(q_1^*q_{1*}q_2^*D-q_2^*D|_{\ys_J})$. As $y_0$ was arbitrary, we know that the base locus of the linear system $|q_1^*\ls^{\delta^{N-1}}\otimes q_2^*\oc_{\pb^N}(-1)|_{\ys_J}|$ is contained in $p_J^{-1}(G_J^{\infty})$. Thus by the same argument we have
$$
{\rm{Bs}}(q_1^*\ls^{m}\otimes q_2^*\oc_{\pb^N}(-1)|_{\ys_J})\subset p_J^{-1}(G_J^{\infty})
$$
for $m\geq \delta^{N-1}$.

For any $J\subset \{0,\ldots,N\}$, set $V_J:=H^0(\pb_J,\oc_{\pb_J}(\delta))$. Then there is a natural inclusion $i_J:{\rm Gr}_{k+1}(V_J)\hookrightarrow\grs$, and the pull-back $i_J^*:{\rm Pic}\big(\grs\big)\xrightarrow{\approx} {\rm Pic}\big({\rm Gr}_{k+1}(V_J)\big)$ is an isomorphism between the Picard groups such that $i_J^*\ls$ is still the tautological line bundle on ${\rm Gr}_{k+1}(V_J)$. Moreover, the inclusion $\ys_J\subset \grs\times\pb_J$ factors through ${\rm Gr}_{k+1}(V_J)\times \pb_J$:
\begin{displaymath}
\xymatrix{
	\ys_J \ar[d] \ar[dr]  &  \\
	{\rm Gr}_{k+1}(V_J)\times \pb_J             \ar[r]^{i_J\times \mathds{1}}     & \grs\times\pb_J  }
\end{displaymath}
and thus our above result (\ref{claim}) also holds when $k+1>N$.

In conclusion, we have the following theorem:
\begin{thm}\label{good base locus}
For any $J\subset \{0,\ldots,N\}$, and $k+1\geq N$, we have
$$
{\rm{Bs}}(q_1^*\ls^{m}\otimes q_2^*\oc_{\pb^N}(-1)|_{\ys_J})\subset p_J^{-1}(G_J^{\infty})
$$
for any $m\geq \delta^{N-1}$.
\end{thm}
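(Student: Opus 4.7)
The plan is to exhibit, for each $J$, an explicit effective divisor lying in the linear system $|q_1^*\ls^{\delta^{N-1}}\otimes q_2^*\oc_{\pb^N}(-1)|_{\ys_J}|$ whose support is contained in $p_J^{-1}(G_J^\infty)$. I would first treat the clean case $J=\emptyset$ with $k+1=N$; the remaining cases should follow from a sub-Grassmannian factorization. To produce an effective divisor in the desired class on $\ys$, it suffices to locate a hyperplane $D\subset\pb^N$ whose pushforward satisfies $q_{1*}q_2^*D\sim\delta^{N-1}\ls$, because then $q_1^*q_{1*}q_2^*D-q_2^*D\geq 0$ automatically. Since ${\rm Pic}(\grs)\cong\mathbb{Z}\cdot\ls$, verifying this linear equivalence boils down to computing one intersection number.

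The key computational input is to test against a carefully chosen rational curve. I would take $C$ to be the image of the line $\Delta([t_0,t_1]):={\rm Span}(z_1^\delta,\ldots,z_{N-1}^\delta,t_0z_N^\delta+t_1z_0^\delta)$ in $\grs$, together with the hyperplane $D:=\{z_0+z_N=0\}$, and check: (i) $\ls\cdot C=1$, which is transparent once one observes that $C$ lifts to a projective line in the Pl\"ucker image; and (ii) $q_1^*C\cdot q_2^*D=\delta^{N-1}$, which should follow from a direct local computation showing that the only set-theoretic intersection point is the pair $({\rm Span}(z_1^\delta,\ldots,z_{N-1}^\delta,z_N^\delta+(-1)^{\delta+1}z_0^\delta),[1,0,\ldots,0,-1])$, with intersection multiplicity $\delta^{N-1}$ coming from the $\delta$-fold ramification of each of the $N-1$ equations $z_i^\delta=0$. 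The projection formula and the Picard-rank-one identification then deliver $q_{1*}q_2^*D\sim\delta^{N-1}\ls$.

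Once one effective divisor has been produced, I would control its base locus by varying the hyperplane. For any $y_0\notin q_1^{-1}(G^\infty)$, the fiber $q_1^{-1}(\Delta_0)$ with $\Delta_0:=q_1(y_0)$ is finite, so a generic hyperplane $D'$ can be chosen whose trace in $\pb^N$ avoids the finite set ${\rm Int}(\Delta_0)=q_2(q_1^{-1}(\Delta_0))$; redoing the pushforward construction with $D'$ in place of $D$ yields an effective divisor in the same linear system missing $y_0$. This gives the base-locus inclusion at $m=\delta^{N-1}$, and twisting by powers of $q_1^*\ls$ (which is base-point-free on $\ys$ since $\ls$ is very ample on $\grs$) upgrades the inclusion to every $m\geq\delta^{N-1}$.

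For general $J$ and $k+1\geq N$, I would exploit the factorization of $\ys_J\subset \grs\times\pb_J$ through ${\rm Gr}_{k+1}(V_J)\times\pb_J$ with $V_J:=H^0(\pb_J,\oc_{\pb_J}(\delta))$; since $i_J^*\ls$ is still the tautological bundle on ${\rm Gr}_{k+1}(V_J)$ with Picard group $\mathbb{Z}$, the entire argument applies verbatim with $\pb^N$ replaced by $\pb_J$, and the bound $\delta^{\dim\pb_J}$ it produces is dominated by the uniform bound $\delta^{N-1}$ of the statement. I expect the main obstacle to be the intersection-number computation producing the multiplicity $\delta^{N-1}$: it is the only step that actually generates the numerical constant of the theorem, whereas everything else is a formal consequence of $\ls$-Picard-rank one, the incidence structure of $\ys$, and very ampleness of $\ls$.
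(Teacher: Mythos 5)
Your proposal is correct and follows essentially the same route as the paper: the same rational curve $C$ and hyperplane $D=\{z_0+z_N=0\}$, the same intersection computation $q_1^*C\cdot q_2^*D=\delta^{N-1}$ combined with ${\rm Pic}(\grs)\cong\mathbb{Z}\cdot\ls$ and the projection formula, the same variation of the hyperplane to exclude any $y_0\notin q_1^{-1}(G^\infty)$ from the support of $q_1^*q_{1*}q_2^*D'-q_2^*D'$, and the same twisting by the base-point-free $q_1^*\ls$ for $m>\delta^{N-1}$. The only cosmetic difference is in the general-$J$ case, where the paper restricts the globally constructed divisor to $\ys_J$ (choosing $D$ generic so the restriction is well defined) while you rerun the argument intrinsically on ${\rm Gr}_{k+1}(V_J)\times\pb_J$; both yield the stated bound.
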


Since $p_J^{-1}(G_J^{\infty})$ may strictly contain the null locus ${\rm Null}(q_1^*\ls|_{\ys_J})=E_J$,  Theorem \ref{good base locus} does not imply the Nakamaye Theorem used in \cite{Bro16}.  However, as was pointed out to us by Brotbek, this theorem is sufficient to obtain the effective bound appearing in Theorem 3.1 of \cite{Bro16}.  Indeed, the proof given there is still valid under the weaker condition
$$
{\rm{Bs}}(q_1^*\ls^{m}\otimes q_2^*\oc_{\pb^N}(-1)|_{\ys_J})\subset p_J^{-1}(G_J^{\infty})
$$
for any $m\geq M$ and any $J\subset \{0,\ldots,N\}$. By Theorem \ref{good base locus} we can take the constant $M$ to be $\delta^{N-1}$.

The above theorem can be generalized to the case of products of Grassmannians. We then set $N=c(k+1)$, and denote $\mathbf{G}_{k+1}(\delta_i):=\mathbf{G}_{k+1}\Big(|H^0\big(\pb^N,\oc_{\pb^N}(\delta_i)\big)|\Big)$ and $\gf :=\prod_{i=0}^{k+1}\mathbf{G}_{k+1}(\delta_i)$ for simplicity. Let $\ys$ be the universal Grassmannian defined by 
$$
\ys:=\{(\Delta_1,\ldots,\Delta_c,z)\in \gf\times \pb^N | \forall\ i,\Delta_i([z])=0\}.
$$
 Let $p_1:\ys \rightarrow \gf$, $p_2:\ys \rightarrow \pb^N$  and $q_i:\ys\rightarrow \gf_{k+1}(\delta_i)$ be the canonical projections to each factor; then $p_1$ is a generically finite to one morphism. Let $\mathcal{L}$ be the tautological ample line bundle 
$$
\lc:= \oc_{\gf_{k+1}(\delta_1)}(1)\boxtimes \ldots \boxtimes \oc_{\gf_{k+1}(\delta_c)}(1),
$$ and $$\lc(\mathbf{a}):=\oc_{\gf_{k+1}(\delta_1)}(a_1)\boxtimes \ldots \boxtimes \oc_{\gf_{k+1}(\delta_c)}(a_c)$$
for any $c$-tuple of positive integers $\mathbf{a}=(a_1,\ldots,a_c)$.

We then define a smooth rational curve $C_i$ in $\gf$ of degree $1$, given by 
\begin{eqnarray}\nonumber
\Delta([t_0,t_1]):={\rm{Span}}(z_1^{\delta_1},z_{c+1}^{\delta_1}\ldots, z_{kc+1}^{\delta_1})\times {\rm{Span}}(z_2^{\delta_2},z_{c+2}^{\delta_2}\ldots, z_{kc+2}^{\delta_2})\times \ldots\\\nonumber
{}\times {\rm{Span}}(t_1z_{i}^{\delta_i}+t_2z_{0}^{\delta_i},z_{c+i}^{\delta_i},\ldots,z_{kc+i}^{\delta_i})\times \ldots\\\nonumber
{}\times{\rm{Span}}(z_{c}^{\delta_{c}},z_{2c}^{\delta_{c}},\ldots,z_{(k+1)c}^{\delta_{c}}).
\end{eqnarray}
where $[t_0,t_1]\in \pb^1$. It is easy to check that $\Delta([t_1,t_2])$ is a smooth embedding from $\pb^1$ to $\gf$, and that it satisfies the degree condition $\lc(\mathbf{a})\cdot C_i=a_i$ for each $i$. 
Consider the hyperplane $D_i$ of $\pb^n$ given by  $\{[z_0,\ldots,z_N]|z_{i}+z_0=0\}$. Then we have
\begin{lem}
	The intersection number of the curve $p_1^*C_i$ and the divisor $p_2^*D_i$ in $\yc$ is $b_i:=\frac{\prod_{j=1}^{c}\delta_j^{k+1}}{\delta_i}$. Moreover, $p_{1*}p_2^*\oc_{\pb^N}(1)\equiv \ls(\mathbf{b})$, where $\mathbf{b}=(b_1,\ldots,b_c)$.
\end{lem}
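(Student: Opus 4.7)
The plan is to mimic the argument of Lemma~\ref{intersection} on the product of Grassmannians, exploiting the fact that $\ys$ is cut out by the $c$ incidence relations $\Delta_j([z])=0$ and that along the test curve $C_i$ only the $i$-th relation varies, while the other spans $\Delta_j$ with $j\neq i$ contribute fixed monomial equations of the form $z_m^{\delta_j}=0$.

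First I would pin down the set-theoretic intersection $p_1^*C_i\cap p_2^*D_i$. For $(\Delta([t_0,t_1]),[z])$ in this intersection, the fixed spans $\Delta_j$ with $j\neq i$ force $z_m=0$ for every $m\in\{1,\ldots,N\}\setminus\{i,c+i,\ldots,kc+i\}$, while the fixed generators of $\Delta_i$ kill $z_{c+i},\ldots,z_{kc+i}$ as well. Only $z_0$ and $z_i$ remain, and the moving equation of $\Delta_i$ together with $z_0+z_i=0$ pins down a unique point $y_0$.

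The main computation is the local intersection multiplicity at $y_0$. In the affine charts $z_0=1$ and $s_0=1$ and after the change of variable $v:=z_i+1$, the scheme-theoretic preimage $p_1^{-1}(C_i)$ near $y_0$ is cut out by a monomial system $w_m^{\delta_j}=0$ (one such equation for each of the $k+1$ fixed generators of $\Delta_j$ with $j\neq i$, and for the $k$ fixed generators of $\Delta_i$) together with one smooth moving equation, namely the expansion of $(v-1)^{\delta_i}+s_1=0$ around $v=0$. The smooth equation eliminates $s_1$ locally in terms of $v$; after imposing $v=0$ (that is, cutting by $p_2^*D_i$) the local ring at $y_0$ becomes an artinian monomial quotient whose length is $\delta_i^{k}\prod_{j\neq i}\delta_j^{k+1}=b_i$ by a standard Bezout count, precisely analogous to the appearance of $\delta^{N-1}$ in Lemma~\ref{intersection}.

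Finally, the class identification $p_{1*}p_2^*\oc_{\pb^N}(1)\equiv \lc(\mathbf{b})$ is formal: the projection formula gives $p_{1*}p_2^*\oc_{\pb^N}(1)\cdot C_i=p_2^*\oc_{\pb^N}(1)\cdot p_1^*C_i=b_i$ for every $i$, and since $\mathrm{Pic}(\gf)\cong\zbb^c$ is generated by the tautological bundles with the $C_i$'s forming a dual basis via $\lc(\mathbf{a})\cdot C_i=a_i$, the class $p_{1*}p_2^*\oc_{\pb^N}(1)$ is forced to equal $\lc(\mathbf{b})$. The only delicate point is the exponent bookkeeping for the monomial count; but it is a direct generalization of the Bezout calculation in Lemma~\ref{intersection} and introduces no essentially new difficulty.
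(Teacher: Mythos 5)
Your proposal is correct and follows essentially the same route as the paper, which simply asserts that $p_1^*C_i$ and $p_2^*D_i$ meet in a single point with multiplicity $b_i$ and then applies the projection formula together with the identification ${\rm Pic}(\gf)\cong\zbb^c$ via $\lc(\mathbf{a})\cdot C_i=a_i$. Your local monomial-length computation giving $\delta_i^{k}\prod_{j\neq i}\delta_j^{k+1}=b_i$ correctly fills in the detail the paper leaves as "easy to show."
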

\begin{proof}
	It is easy to show that $p_1^*C_i$ and $p_2^*D_i$ intersect only at one point with multiplicity $b_i$. By the projection formula we have
	$$
	p_{1*}p_2^*D_i\cdot C_i=p_{1*}(p_2^*D_i\cdot p_1^*C_i)=b_i.
	$$
	Since $\lc(\mathbf{a}):\mathbf{a}\in\zbb^c\xrightarrow{\sim} {\rm{Pic(\gf)}}$ is an isomorphism, we get  $p_{1*}p_2^*D_i\equiv p_{1*}p_2^*\oc_{\pb^N}(1)\equiv \lc(\mathbf{b})$, thanks to the fact that $\lc\cdot C_i=1$.
\end{proof}
Thus the line bundle $p_1^*\lc(\mathbf{b})\otimes p_2^*\oc_{\pb^N}(-1)$ is effective, and by arguments similar to those of Section \ref{Nakamaye} we also find
\begin{eqnarray}\label{locus debarre}
{\rm Bs}(p_1^*\lc(\mathbf{b})\otimes p_2^*\oc_{\pb^N}(-1))\subset p_1^{-1}(G^{\infty}),
\end{eqnarray}
 where $G^{\infty}$ is the set of points in $\mathbf{G}$ such that their $p_1$-fiber is not a finite set. We can then apply the methods already used above to show that the same estimate also holds for all the strata $\ys_I$ of $\ys$.

\section{Effectivity in the Kobayashi Conjecture}
In this section we give an effective estimate for the Kobayashi conjecture, based mainly on Theorem \ref{good base locus}. First, we briefly recall the ideas of \cite{Bro16} describing how Theorem \ref{good base locus} can be used to give an effective bound for the degrees of general hypersurfaces which are Kobayashi hyperbolic. We try to make our statements self-contained.

Let $X$ be a projective manifold of dimension $n$ and $A$ a very ample line bundle on it. Fix $\tau_0,\ldots,\tau_N\in H^0(X,A)$ in general position. Set $\mathbb{I}:=\{I=(i_0,\ldots,i_N)||I|=\delta \}$. Consider the universal family $\rho:\widetilde{\mathscr{H}}\subset X\times \ab \rightarrow \mathbb{A}$ of hypersurfaces in $X$ given by certain bihomogenous sections of the form
\begin{equation}\label{family}
F(\af):=\sum_{I\in \mathbb{I}}a_I\tau^{(r+k)I},
\end{equation}
where for all $I\in \mathbb{I}$, $a_I\in H^0(X,A^{\epsilon})$, so that $\af:=(a_I)_{I\in \mathbb{I}}\in \mathbb{A}:=\oplus_{I\in \mathbb{I}}H^0(X,A^\epsilon)$. Set $H_\af:=\rho^{-1}(\af)$ and we only consider the smooth locus $\ab_{\rm sm}\subset \ab$ which is the Zariski open subset of $\ab$ such that $H_\af$ is smooth.   Then we can construct a rational map given by the collection of Wronskians
\begin{eqnarray}\nonumber
\Phi: \mathbb{A}\times X_k&\dashrightarrow& P(\Lambda^{k+1}\cb^{\mathbb{I}})\\\nonumber
(\af,w)&\mapsto&([\omega_{I_0,\ldots,I_k}(a_{I_0},\ldots,a_{I_k})(w)])_{I_0,\ldots,I_k\in \ib}.
\end{eqnarray}
The map $\Phi$ factors through the Pl\"ucker embedding
$$
{\rm Pluc}:{\rm Gr}_{k+1}(\cb^{\ib})\hookrightarrow P(\Lambda^{k+1}\cb^{\mathbb{I}}).
$$

If we denote by $\hat{X}_k$  the blow-up $\nu_k:\hat{X}_k\rightarrow X_k$ of the asymptotic $k$-th Wronskian ideal sheaf  $\mathfrak{w}_{\infty}(X_k)$ of the $k$-th Demailly-Semple tower of $X$, such that $\nu_{k}^{*}\big(\mathfrak{w}_{\infty}(X_k)\big)=\oc_{\hat{X}_k}(-F)$ for some effective divisor $F$.  Then $\nu_k$ partially resolves the singularities of $\Phi$, under the condition (based on Proposition 3.8 in \cite{Bro16}) that
 $$\epsilon\geq m_{\infty}(X,A),\quad {\rm dim}\hat{X}_k+k<\#\mathbb{I}_x$$
 for any $x$ in $X$. Here
 $$
 \ib_x:=\{I\in\ib,|\tau^{I}(x)\neq 0\}.
 $$
  Since for any $x$ we have $\#\mathbb{I}_x\geq\binom{N-n+\delta}{\delta}$, then if
\begin{equation}\label{condition 1}
(k+1)n<\binom{N-n+\delta}{\delta},\quad \epsilon\geq m_{\infty}(X,A),\tag{$\bigstar$}
\end{equation}
 there exists a non-empty Zariski open subset $\ab_{\rm def}\subset \ab_{\rm sm}$ such that the rational map $\Phi|_{\ab_{\rm def}\times X_k}\dashrightarrow \text{Gr}_{k+1}(\cb^{\ib})$ can be resolved into a regular morphism
$$
\hat{\Phi}:\ab_{\rm def}\times \hat{X}_k\rightarrow \text{Gr}_{k+1}(\cb^{\ib}).
$$

For example, we can take $N=n+1, k=n$ and $\delta=n(n+1)$ to satisfy Condition \ref{condition 1}.

 Since the blow-up of the Wronskian ideal sheaf behaves well under restriction, \emph{i.e.} if $Y\subset X$ is a smooth subvariety, then $\nu_k:\nu_k^{-1}(Y_k)\rightarrow Y_k$ is also a blow-up of the Wronskian ideal sheaf $\mathfrak{w}_{\infty}(Y_k)$, and thus it behaves well in families (ref. Proposition 2.7 in \cite{Bro16}). Therefore, if we denote by $\mathcal{\hs}_k^{\rm rel}$ the relative $k$-th Demailly-Semple tower of the smooth family $\hs\subset \ab_{\rm def}\times X$ given by (\ref{family}), $\nu_k$ will induce a morphism $\hat{\nu}_k:\hat{\hs}_k^{\rm rel}\subset \ab_{\rm def}\times \hat{X}_k\rightarrow \hs_k^{\rm rel}$, such that for any $\af\in \ab_{\rm def}$, if we denote $\hat{H}_{\af,k}:=\hat{\nu}_k^{-1}(H_{\af,k})$, then $\hat{\nu}_k:\hat{H}_{\af,k}\rightarrow H_{\af,k}$ is the blow-up of the Wronskian ideal sheaf $\mathfrak{w}_{\infty}(H_{\af,k})$.

If we introduce the map induced by $\hat{\Phi}$
\begin{eqnarray}\nonumber
\hat{\Psi}:\ab_{\rm def}\times \hat{X}_k&\rightarrow& \rm{Gr}_{k+1}(\cb^{\ib})\times \pb^N\\\nonumber
(\af,\hat{w})&\mapsto&\big(\hat{\Phi}(\af,w),[\tau^r(\hat{w})]\big),
\end{eqnarray}
then when restricted to $\hat{\hs}_k^{\rm rel}$ the morphism $\hat{\Psi}$ factors through $\ys$, which is the universal Grassmannian defined in Section \ref{Nakamaye}
$$
\mathscr{Y}:=\{(\Delta,[z])\in {\rm{Gr}}_{k+1}(\cb^{\ib})\times \pb^N |\Delta([z])=0 \}.
$$
By the definition of $\hat{\Psi}$, for any positive integer $m$ we have
$$
\hat{\Psi}^{*}(q_1^*\ls^m\otimes q_2^*\oc_{\pb^N}(-1))=\nu_k^*(\oc_{X_k}(mk')\otimes \pi_{0,k}^*A^{m(k+1)(\epsilon+k\delta)-r})\otimes\oc_{\hat{X}_k}(-mF),
$$
where $k':=\frac{k(k+1)}{2}$.

Now take $k+1=N$. By Theorem \ref{good base locus}, for any $m\geq M:=\delta^{N-1}$,  we have a good control of the base locus
$$
{\rm Bs}(q_1^*\ls^m\otimes q_2^*\oc_{\pb^N}(-1))\subset q_1^{-1}(G^{\infty}),
$$
where $G^{\infty}$ is the set of points in ${\rm{Gr}}_{k+1}(\cb^{\ib})$ such that the inverse image of $q_1$ is not finite.

If we can find a non-empty Zariski open subset $\ab_{\rm nef}\subset \ab_{\rm def}$ such that 
\begin{eqnarray}\label{avoid}
\hat{\Phi}^{-1}(G^{\infty})\cap (\ab_{\rm nef}\times \hat{X}_k)=\emptyset,
\end{eqnarray}
then we can prove that
\begin{equation}\label{pull back}
\nu_k^*(\oc_{X_k}(mk')\otimes \pi_{0,k}^*A^{m(k+1)(\epsilon+k\delta)-r})\otimes\oc_{\hat{X}_k}(-mF)|_{\hat{H}_{\af,k}}
\end{equation}
is nef on $\hat{H}_{\af,k}$ for any $\af\in \ab_{\rm nef}$. If one applies the Fundamental Vanishing Theorem (ref. \cite{Dem95}) to constrain all entire curves in the base locus, the bundles in (\ref{pull back}) need to be twisted by the opposite of an ample divisor, \emph{i.e.} we must have
\begin{equation}\label{condition 2}
m(k+1)(\epsilon+k\delta)<r.\tag{$\blacklozenge$}
\end{equation}

In order to avoid the positive dimensional fibers, as expressed by condition (\ref{avoid}), and as was proved in Lemma 3.8 of \cite{Bro16}, $\delta$ needs to satisfy the following condition:
\begin{equation}\label{codim}
\delta+1>{\rm dim}(\hat{X}_k)=n+(n-1)k.
\end{equation}
If we fix $N=k+1=n+1$,   $\delta$ only needs to satisfy
\begin{equation}\label{condition 3}
\delta\geq n^2.\tag{$\spadesuit$}
\end{equation}

Let $X_{n}$ be the $n$-th Demailly-Semple tower for $X$. By Theorem \ref{wronskian result} we have
$$
\mathfrak{w}_{\infty}(X_{n},A)=\mathfrak{w}(X_{n},A^{n}),
$$
which implies $m_{\infty}(X_{n},A)=n$. By the constructions of $H_{\af}$, we see that 
$$
H_{\af}\in H^0\Big(X,\oc_X\big(\epsilon+\delta(r+k)\big)A\Big).
$$
By Conditions \ref{condition 1}, \ref{condition 2} and \ref{condition 3}, for any $n\geq 2$, we can take $N=k+1=n+1$ and $\delta=n(n+1)$, to conclude that:
\begin{thm}\label{maindecom}
	Let $X$ be a projective manifold endowed with a very ample line bundle, for any degree $d\in \mathbb{N}$ satisfying
\begin{eqnarray}\label{decompose}\nonumber
	\exists \epsilon\geq n,\ r>\delta^{N-1}(k+1)(\epsilon+k\delta)=n^n(n+1)^{n+1}\big(\epsilon+n^2(n+1)\big):\\
	d=n(n+1)(r+n)+\epsilon,
\end{eqnarray}
the general hypersurfaces in $|H^0(X,\oc_X(dA))|$ is Kobayashi hyperbolic. 
\end{thm}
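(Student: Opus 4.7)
The plan is to specialize the construction recalled above with the parameter choice $N = k+1 = n+1$ (so $k = n$ and $\delta = n(n+1)$), and to verify that all three numerical constraints (\ref{condition 1}), (\ref{condition 2}), (\ref{condition 3}) are simultaneously met; the Kobayashi hyperbolicity of the generic hypersurface then follows by invoking the Fundamental Vanishing Theorem exactly as in \cite{Bro16}.

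First I would verify Condition (\ref{condition 1}). Its combinatorial part becomes $(k+1)n = n(n+1) < \binom{1+\delta}{\delta} = n(n+1)+1$, which holds; its Wronskian part reduces to $\epsilon \geq m_{\infty}(X_n,A) = n$, which is true by Theorem \ref{wronskian result} applied to the very ample line bundle $A$, combined with the hypothesis $\epsilon \geq n$. Condition (\ref{condition 3}) amounts to $n(n+1) \geq n^2$, which is immediate. These checks provide a non-empty Zariski open $\ab_{\rm def} \subset \ab_{\rm sm}$ on which $\hat{\Phi}$ is regular; combined with the codimension estimate of Lemma~3.8 of \cite{Bro16} (whose hypothesis $\delta + 1 > \dim \hat{X}_k$ is precisely (\ref{condition 3})), this yields a further non-empty open subset $\ab_{\rm nef} \subset \ab_{\rm def}$ for which condition (\ref{avoid}) holds.

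Next I would feed in Theorem \ref{good base locus} with the explicit bound $M = \delta^{N-1} = n^n(n+1)^n$. Taking $m = M$, the inclusion ${\rm Bs}(q_1^{*}\ls^m \otimes q_2^{*}\oc_{\pb^N}(-1)) \subset q_1^{-1}(G^{\infty})$ together with (\ref{avoid}) forces the pull-back $\nu_k^{*}(\oc_{X_k}(mk') \otimes \pi_{0,k}^{*}A^{m(k+1)(\epsilon + k\delta) - r}) \otimes \oc_{\hat{X}_k}(-mF)|_{\hat{H}_{\af,k}}$ to be nef (in fact base-point free) on $\hat{H}_{\af,k}$ for every $\af \in \ab_{\rm nef}$. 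Condition (\ref{condition 2}) in the form $r > m(k+1)(\epsilon + k\delta) = n^n(n+1)^{n+1}(\epsilon + n^2(n+1))$ makes the exponent of $A$ strictly negative, which is exactly the inequality imposed on $r$ in the hypothesis.

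Finally, applying the Fundamental Vanishing Theorem to the nef line bundle above, twisted by the opposite of an ample divisor on $\hat{H}_{\af,k}$, forces every non-constant entire curve $f : \cb \to H_{\af}$, after lifting to the Demailly-Semple tower, to land in a proper base locus, yielding a contradiction that establishes hyperbolicity for $H_{\af}$ with $\af \in \ab_{\rm nef}$. The degree identification $H_{\af} \in H^0(X, \oc_X((\epsilon + \delta(r+k))A))$ becomes $d = \epsilon + n(n+1)(r+n)$, matching the formula in the statement. The main obstacle is not conceptual but bookkeeping: ensuring that the single parameter choice $N = n+1$, $\delta = n(n+1)$ satisfies all three conditions while keeping the resulting bound on $d$ polynomial in $n$; once this is verified, the theorem is a direct assembly of Theorems \ref{wronskian result} and \ref{good base locus} with the Brotbek machinery.
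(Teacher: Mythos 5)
Your proposal is correct and follows essentially the same route as the paper: it specializes Brotbek's construction to $N=k+1=n+1$, $\delta=n(n+1)$, checks conditions $(\bigstar)$, $(\blacklozenge)$, $(\spadesuit)$ with the same arithmetic (including the tight inequality $n(n+1)<\binom{\delta+1}{\delta}=n(n+1)+1$), and assembles Theorem \ref{wronskian result} (giving $m_{\infty}=n$), Theorem \ref{good base locus} (giving $M=\delta^{N-1}$), and the Fundamental Vanishing Theorem exactly as the paper does. The degree identification $d=\epsilon+n(n+1)(r+n)$ also matches.
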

In particular, for any $d\geq d_0:=n^{n+1}(n+1)^{n+2}(n^3+2n^2+2n-1) + n^3 + 3n^2 + 3n$, there exists a pair $(r,\epsilon)\in \mathbb{N}^2$ satisfying Condition \ref{decompose} in Theorem \ref{maindecom}. This proves our Main Theorem \ref{main}.
\begin{rem} By using Siu's technique of slanted vector fields on higher jet spaces and combining it with techniques of Demailly, the first effective lower bound for the degree of the general hypersurface which is weakly hyperbolic was given by Diverio, Merker and Rousseau {\rm\cite{DMR10}}, where they confirmed the Green-Griffiths-Lang conjecture for generic hypersurfaces in $\pb^n$ of degree $d\geq 2^{(n-1)^5}$. Later on, by means of a very elegant combination of the holomorphic Morse inequalities and a probabilistic interpretation of higher order jets, Demailly was able to improve the lower bound to $d\geq\floor[\Big]{\frac{n^4}{3}\Big(n\log\big(n\log(24n)\big)\Big)^n}$ {\rm\cite{Dem10}}. The latest best known bound was $d\geq (5n)^2n^n$ by Darondeau {\rm\cite{Dar15}}. 
\end{rem}

\section{Effectivity for the Ampleness of the Complete Intersection}

 Let $M$ be a projective manifold of dimension $N$ and $A$ a very ample line bundle on it. For any integer $c\geq \frac{N}{2}$, by the work \cite{BD15} and \cite{Xie16} we know that when $m$ is large enough, the complete intersection $X:=H_1\cap \ldots\cap H_c$ of general hypersurfaces in $H^0(X,mA)$ has ample cotangent bundle. In \cite{Xie16} Songyan Xie gave an effective bound $N^{N^2}$ for $m$. Our goal in this section is to obtain a better estimate, based on the work \cite{BD15}.
 
To start with, we will outline the stategy of \cite{BD15} to prove that the general complete intersection has an ample cotangent bundle, and show what is involved in order to obtain the effectivity. The general ideas are similar to those used in the proof of Kobayashi hyperbolicity. Howvever, in that case, one does not encounter the intrinsic singularities related with the Demailly-Semple towers, since the Wronskian ideal sheaf arises only for jets of order $2$ and higher. 
 
  First consider a family $p:\mathcal{X}\rightarrow S$ of complete intersection subvarieties in $M$ cut out by  certain $c$-bihomogenous sections, depending on two $c$-tuples of positive integers $\bm{\epsilon}:=(\epsilon_1,\ldots,\epsilon_c)$ and $\bm{\delta}:=(\delta_1,\ldots,\delta_c)$, and a positive integer $r$ fixed later. Then for general $\bm{\lambda}\in S$, the fiber $X_{\lambda}:=p^{-1}(\bm{\lambda})$ is a compete intersection of $c$ smooth hypersurfaces $H_1\in |H^0\big(M,\oc_M(d_1A)\big)|,\ldots,H_c\in |H^0\big(M,\oc_M(d_cA)\big)|$, where $d_p:=\epsilon_p+(r+1)\delta_p$ for each $p=1,\ldots,c$.  Now we can define a rational map $\Psi$ from  $S\times \pb(\Omega_M)$ to $\gf_2(\delta_1)\times \ldots \gf_2(\delta_c)\times \pb^{2c}$. As was proved in Proposition 2.4 in \cite{BD15}, as soon as $\delta_p\geq2$ for any $1\leq p\leq c$, after shrinking the parameter space $S$ to some non-empty Zariski open set $U_{\rm def}$, when restricted to $U_{\rm def}\times \pb(\Omega_M)$, the rational map $\Psi$ is a regular morphism.
 
 Along $\pb(\Omega_{\mathcal{X}/S})\cap \big(U_{\rm def}\times \pb(\Omega_M)\big)$, the map $\Psi$ factors through the universal Grassmannian $\ys$ defined by
 $$
 \ys:=\{(\Delta_1,\ldots,\Delta_c,z)\in \gf_2(\delta_1)\times \ldots \gf_2(\delta_c)\times \pb^{2c} | \forall\ i,\Delta_i([z])=0\}.
 $$ 
 Here we use the notations in Section \ref{Nakamaye}. By the definition of $\Psi$, for any $c$-tuple of positive integers $\bm{\af}$, we have
 $$
 \Psi|_{\pb(\Omega_{\mathcal{X}/S})\cap \big(U_{\rm def}\times \pb(\Omega_M)\big)}^{*}\big(\ls(\af)\boxtimes \oc_{\pb^{2c}}(-1)\big)=\oc_{\pb(\Omega_{\mathcal{X}/S})}(\sum_{i=1}^{c}a_i)\otimes \pi^*\oc_X \Big(\big(2\sum_{p=1}^{c}(\epsilon_p+\delta_p)a_p-r\big)A\Big),
 $$
 where $\pi:\pb(\Omega_{\mathcal{X}/S})\rightarrow X$ is the projection map, and $\lc(\mathbf{a}):=\oc_{\gf_{2}(\delta_1)}(a_1)\boxtimes \ldots \boxtimes \oc_{\gf_{2}(\delta_c)}(a_c)$. Observe that if we take 
 \begin{equation}\label{debarre condition 1}
 \tag{$\heartsuit$}
 2\sum_{p=1}^{c}(\epsilon_p+\delta_p)b_p<r,
 \end{equation}
 then the above bundle becomes the symmetric differential forms with a negative twist.
 By (\ref{locus debarre}), if we set $b_i:=\frac{\prod_{i=j}^{c}\delta_j^{2c}}{\delta_i}$, then we have
 \begin{equation}
 {\rm Bs}\big(p_1^*\lc(\mathbf{b})\otimes p_2^*\oc_{\pb^{2c}}(-1)\big)\subset G^{\infty},
 \end{equation}
 where $G^{\infty}$ to be the set of points in $\mathbf{G}:=\gf_2(\delta_1)\times \ldots \gf_2(\delta_c)$ such that the fiber of projection map $p_1:\ys\rightarrow \gf$ is not a finite set. Thus if we can shrink $U_{\rm def}$ to some non-empty Zariski open subset $U_{\rm nef}\subset U_{\rm def}$ such that 
 \begin{eqnarray}\label{avoid exceptional debarre}
 \Psi\Big(\pb(\Omega_{\mathcal{X}/S})\cap\big(U_{\rm nef}\times \pb(\Omega_M)\big)\Big)\cap p_1^{-1}(G^\infty)=\emptyset,
 \end{eqnarray}
 then the bundle
 \begin{equation}
 \Psi|_{\pb(\Omega_{\mathcal{X}/S})\cap \big(U_{\rm nef}\times \pb(\Omega_M)\big)}^{*}\big(\ls(\bff)\boxtimes \oc_{\pb^{2c}}(-1)\big)\big|_{\pb(\Omega_{X_{\bm{\lambda}}})}=\oc_{\pb(\Omega_{X_{\bm{\lambda}}})}(\sum_{i=1}^{c}a_i)\otimes \pi^*\oc_X\big(-q(\bm{\epsilon},\bm{\delta},r)A\big)
 \end{equation}
 is nef when restricted to $\pb(\Omega_{X_{\bm{\lambda}}})$ for any $\bm{\lambda}\in U_{\rm nef}$. Here we set $q(\bm{\epsilon},\bm{\delta},r):=r-2\sum_{p=1}^{c}(\epsilon_p+\delta_p)b_p$. Thus if Condition \ref{debarre condition 1} is fulfilled, for any $\bm{\lambda}\in U_{\rm nef}$, $X_{\bm{\lambda}}$ has an ample cotangent bundle. 
 
In order to avoid the exceptional locus on which Condition \ref{avoid exceptional debarre} holds,  as was shown in the proof of Lemma 2.6 in \cite{BD15}, $\bm{\delta}$ needs to satisfy 
\begin{equation}\label{debarre condition 2}
\min_{i=1,\ldots c}\delta_i+1>{\rm dim}\big(\pb(\Omega_M)\big)=2N-1.\tag{$\diamondsuit$}
\end{equation}
 
 In conclusion,  combining Condition \ref{debarre condition 1} and \ref{debarre condition 2} together, we have a more refined result compared to Corollary 2.8 in \cite{BD15}:
\begin{cor}\label{condition vector}
	On a $N$-dimensional smooth projective variety $M$, equipped with a very ample line bundle $A$, for any degrees $(d_1,\ldots,d_c)\in (\mathbb{N})^{c}$ satisfying
	\begin{eqnarray}\nonumber
	\exists \bm{\delta}_{(\delta_p\geq 2N-1)},\ \exists \bm{\epsilon}_{(\epsilon_p\geq 1)},\ \exists r>2c\prod_{i=1}^{c}\delta_i^2+2\sum_{i=1}^{c}\frac{\epsilon_i\prod_{j=1}^{c}\delta_j^2}{\delta_i}, \mbox{ s.t. }\\\nonumber
	d_p=\delta_p(r+1)+\epsilon_p \qquad (p=1,\ldots,c),
	\end{eqnarray}
	the complete intersection $X:=H_1\cap \ldots H_c$ of general hypersurfaces $H_1\in |A^{d_1}|,\ldots,H_c\in |A^{d_c}|$ has an ample cotangent bundle.
\end{cor}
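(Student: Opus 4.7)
The plan is to assemble the framework described in the paragraphs preceding the corollary and to conclude via the ``good base locus'' estimate (\ref{locus debarre}) established in Section~\ref{Nakamaye}. First, given $\bm{\delta}$, $\bm{\epsilon}$, and $r$ satisfying the hypothesis, I set $d_p := \delta_p(r+1) + \epsilon_p$ and consider the family $p: \mathcal{X} \to S$ of complete intersections in $M$ cut out by $c$ bihomogeneous sections of the prescribed bidegrees, exactly as in \cite{BD15}. The associated rational map $\Psi: S \times \pb(\Omega_M) \dashrightarrow \gf \times \pb^{2c}$ with $\gf := \gf_2(\delta_1) \times \cdots \times \gf_2(\delta_c)$ is then available. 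Since $\delta_p \geq 2N-1 \geq 2$ for every $p$, Proposition~2.4 of \cite{BD15} produces a Zariski-dense open $U_{\rm def} \subset S$ over which $\Psi$ restricts to a regular morphism on $U_{\rm def} \times \pb(\Omega_M)$.

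Along $\pb(\Omega_{\mathcal{X}/S})$ the map $\Psi$ factors through the universal Grassmannian $\ys$. Choosing the weight vector $\bff := (b_1, \ldots, b_c)$ with $b_i := \prod_{j=1}^c \delta_j^2 / \delta_i$, the estimate (\ref{locus debarre})---the product-Grassmannian analogue of Theorem~\ref{good base locus}---gives $\mathrm{Bs}\bigl(p_1^* \lc(\bff) \otimes p_2^* \oc_{\pb^{2c}}(-1)\bigr) \subset p_1^{-1}(G^\infty)$. The next step is to shrink $U_{\rm def}$ further to a non-empty open $U_{\rm nef}$ on which the avoidance condition (\ref{avoid exceptional debarre}) holds, i.e., $\Psi\bigl(\pb(\Omega_{\mathcal{X}/S}) \cap (U_{\rm nef} \times \pb(\Omega_M))\bigr)$ misses $p_1^{-1}(G^\infty)$; this is carried out by a dimension-count argument in the spirit of Lemma~2.6 of \cite{BD15}, whose hypothesis requires $\min_i \delta_i + 1 > \dim \pb(\Omega_M) = 2N-1$, which is exactly the lower bound $\delta_p \geq 2N-1$ imposed in the statement.

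Once $U_{\rm nef}$ is secured, the restriction of $\Psi^*\bigl(p_1^* \lc(\bff) \otimes p_2^* \oc_{\pb^{2c}}(-1)\bigr)$ to each fiber $\pb(\Omega_{X_{\bm{\lambda}}})$, $\bm{\lambda} \in U_{\rm nef}$, is base-point-free, hence nef, because $\Psi$ is regular on that fiber and its image avoids the base locus. The displayed pullback computation identifies this restricted bundle with $\oc_{\pb(\Omega_{X_{\bm{\lambda}}})}\bigl(\textstyle\sum_i b_i\bigr) \otimes \pi^* \oc_X\bigl(-q(\bm{\epsilon}, \bm{\delta}, r) A\bigr)$, where $q(\bm{\epsilon}, \bm{\delta}, r) := r - 2\sum_p (\epsilon_p + \delta_p) b_p$. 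Expanding $2\sum_p \delta_p b_p = 2c \prod_j \delta_j^2$ and $2\sum_p \epsilon_p b_p = 2 \sum_i \epsilon_i \prod_j \delta_j^2 / \delta_i$, the arithmetic inequality on $r$ in the hypothesis is precisely $q(\bm{\epsilon}, \bm{\delta}, r) > 0$, i.e., condition (\ref{debarre condition 1}). A standard twisting argument (checking positivity of $\oc_{\pb(\Omega_{X_{\bm{\lambda}}})}(1)$ on curves in fibers of $\pi$ and curves with non-constant image separately) then upgrades the nefness to the ampleness of $\Omega_{X_{\bm{\lambda}}}$.

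The chief obstacle is the avoidance condition (\ref{avoid exceptional debarre}): since Theorem~\ref{good base locus} only controls the base locus up to $p_1^{-1}(G^\infty)$, which may strictly contain the null locus of $p_1^* \lc(\bff)$, Nakamaye's theorem cannot be applied verbatim here. Fortunately, the Brotbek--Darondeau dimension argument is robust enough to survive this weaker inclusion, at the price of the bound $\delta_p \geq 2N-1$ rather than a sharper one. Everything after this point is essentially bookkeeping: the decomposition $d_p = \delta_p(r+1) + \epsilon_p$ is built into the definition of the family, and the stated bound on $r$ is an immediate algebraic reformulation of condition (\ref{debarre condition 1}) under the choice $\bff = (b_1,\ldots,b_c)$.
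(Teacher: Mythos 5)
Your proposal is correct and follows essentially the same route as the paper: the family $p:\mathcal{X}\to S$, regularization of $\Psi$ over $U_{\rm def}$ via Proposition 2.4 of \cite{BD15}, the base-locus estimate (\ref{locus debarre}) with $b_i=\prod_j\delta_j^2/\delta_i$, the avoidance of $p_1^{-1}(G^\infty)$ over $U_{\rm nef}$ under $\delta_p\geq 2N-1$, and the identification of the hypothesis on $r$ with condition (\ref{debarre condition 1}) yielding nefness of the negatively twisted bundle and hence ampleness of the cotangent bundle. No gaps; the arithmetic reformulation $2\sum_p\delta_p b_p = 2c\prod_j\delta_j^2$ and $2\sum_p\epsilon_p b_p = 2\sum_i\epsilon_i\prod_j\delta_j^2/\delta_i$ matches the stated bound exactly.
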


Since any subvariety of a variety with ample cotangent bundle also has ample cotangent bundle, we only need to deal with the case $c_0:=\floor{\frac{N+1}{2}}$. If we take $\delta_1=\ldots=\delta_{c_0}=2N-1$, then any $d\geq 4c_0(2N-1)^{2c_0+1}+6N-3,$ can be decomposed as
$$
d=(2N-1)(r+1)+\epsilon
$$
with $1\leq\epsilon< 2N$ and $r>4c_0(2N-1)^{2c_0}$, satisfying the conditions in Corollary \ref{condition vector}. In particular, if $M$ is taken to be $\pb^N$, then we can take $A$ to be $\oc_{\pb^N}(1)$, and thus for all $d\geq 4c_0(2N-1)^{2c_0+1}+6N-3$, the complete intersection of general hypersurfaces $H_1,\ldots,H_c$ with $H_1,\ldots,H_{c_0}$ in  $\oc_{\pb^N}(d)$ and other hypersurfaces $(H_i)_{i>c_0}$ of any degree has an ample cotangent bundle. Our Main Theorem \ref{Debarre} is proved.
\medskip

\noindent
\textbf{Acknowledgements.}
I would like to warmly thank my thesis supervisor Professor Jean-Pierre Demailly for his
very fruitful discussions and suggestions, and also for his patience
and disponibility. The effective bounds in the work were improved substantially during the ``K\"ahler geometry Workshop" held at Institut Fourier in May 2016, under the auspices of the ALKAGE project. A thousand thanks to Damian Brotbek for his very careful reading of the manuscript, useful suggestions and comments, to Songyan Xie for explaining his work and ideas patiently, and also to Lionel Darondeau for his interests and remarks on the work.

\end{document}